\theoremstyle{thmstyleone}%
\newtheorem{theorem}{Theorem}
\theoremstyle{thmstyletwo}%
\newtheorem{corollary}{Corollary}%
\newtheorem{lemma}{Lemma}%
\theoremstyle{thmstylethree}%
\def \ve{\epsilon}
\def\Re{{\rm I\kern-0.2em R}}
\def\R{\Re}
\newcommand{\brou}[1]{\bigl( #1 \bigr)}
\begin{document}

\title[On Averaging of a Class of  \lq\lq Non-Singularly Perturbed"  Control Systems]{On Averaging of a Class of \lq\lq  Non-Singularly Perturbed"  Control Systems}

\author[1]{\fnm{Vladimir} \sur{Gaitsgory}}\email{vladimir.gaitsgory@mq.edu.au}

\author*[2]{\fnm{Ilya} \sur{Shvartsman}}\email{ius13@psu.edu}

\affil[1]{\orgdiv{Department of Mathematics}, \orgname{Macquarie University}, \orgaddress{\city{Sydney}, \postcode{2109}, \state{NSW}, \country{Australia}}}

\affil*[2]{\orgdiv{Department of Computer Science and Mathematics}, \orgname{Penn State Harrisburg}, \orgaddress{\city{Middletown}, \postcode{17057}, \state{PA}, \country{USA}}}

\abstract{We study a control system resembling a singularly perturbed system whose variables are decomposed into groups that change their values with rates of different orders of magnitude. We establish that the   slow  trajectories of this system are dense in the set of solutions of a certain differential inclusion and discuss an implication of this result for optimal control.}

\keywords{singular perturbations, averaging method, optimal control
}

\pacs[AMS Subject Classification]{34E15, 34C29, 34A60}

\maketitle

\section{Introduction and the main result}\label{S1}

In this paper, we consider the  system
\begin{eqnarray}\label{e:fast}
\ve \frac{dy(t)}{dt} &=&Ay(t)+Bu(t), \ \ \ \ y(0)=y_0,     \\
\frac{dz(t)}{dt}&=&g(u(t),y(t),z(t))  ,   \ \ \ \ z(0)=z_0,   \label{e:slow}
\end{eqnarray}
where $\ve > 0$ is a parameter, the controls $u(t)$ are measurable functions taking values in $\R^k$, and
$A$ and $B$ are $m\times m$ and $m\times k$ matrices that satisfy the rank controllability condition:
\begin{equation}\label{e-rank}
{\rm rank} [ B, AB,..., A^{m-1}B]=m.
\end{equation}
The function
 $g:\R^k\times \R^m \times
\R^n\to \R^n$ is assumed to be bounded
\begin{equation}\label{e-g-bound}
\sup_{(u,y,z)\in\R^k\times\R^m\times\R^n}||g(u,y,z)||:= M_g<\infty
\end{equation}
  and satisfying  the Lipschitz condition in
$(y,z)$ uniformly with respect to $u\in \R^k$.

If the parameter $\ve$ was small, the system (\ref{e:fast})-(\ref{e:slow}) would belong to the class of the so-called {\em singularly perturbed} (SP) systems.
SP systems are characterized by the decomposition of the state variables into groups that change their values with rates of different orders of magnitude (so that some of them can be considered as fast/slow with respect to others), which is due to the presence of the small singular perturbations parameter $\ve$. Such systems describe processes and interactions in disparate time scales, and they
have been extensively studied in the literature (see, e.g.,
research monographs  \cite{Ben}, \cite{Kab2},
 \cite{Kok1}, \cite{Kus3} and
  surveys  \cite{Dmitriev},  \cite{Nai}, \cite{Reo1}, \cite{Zhang-Naidu}).

One of the approaches to  SP control systems is the  {\em averaging method} based on the analysis of asymptotic properties of the sets of time averages of the equation describing
the   dynamics of the slow state variables over the fast   control-state trajectories considered on the intervals $[0,S]$ for large values of $S>0$. If the limit of these sets exists as $S$ tends to infinity, then (under certain additional conditions) it defines the right-hand-side of the differential inclusion, the solutions of which approximate the slow components of the solutions of the SP system when the singular perturbation parameter $\ve$ tends to zero (see \cite{Dont-Don},    \cite{Gai0}, \cite{Gai1}, \cite{Gra},
  \cite{QW} and also  \cite{Alv}, \cite{Art2}, \cite{AG} for related developments).

In this paper,  the parameter $\ve$  is not assumed to be small, and to distinguish it from the case of singular perturbations, we call system
(\ref{e:fast})-(\ref{e:slow}) {\em  non-singularly perturbed}. 
We show that the approximation of the $z$-components of the solution
of system (\ref{e:fast})-(\ref{e:slow}) by the solution of a certain differential inclusion is still valid, but this result is no longer  asymptotic and
holds for any $\ve>0$. Such an approximation is possible due to the fact that  the $y$-components of the state variables of the system (\ref{e:fast})-(\ref{e:slow}) can change their values arbitrarily fast (see Lemma \ref{Lem-control} in the next section) while the rates of change of the $z$-components are bounded by the constant $M_g$ (see
(\ref{e-g-bound})).

To state our main result, let us introduce the differential inclusion
\begin{equation}\label{eq-DI}
\frac{dz(t)}{dt}\in V(z(t)) ,\ \ \ \ z(0)=z_0,
\end{equation}
where
\begin{equation}\label{eq-RHS}
V(z):= \bar{\rm co} (g(\R^k, \R^m, z)), \ \ \ \ g(\R^k, \R^m, z):= \{v\ | \ v =g(u,y,z), \ u\in\R^k, \ y\in \R^m\},
\end{equation}
with $\bar{co} $ standing for the closure of the convex hull of the corresponding set.

Let $T$ be an arbitrary positive number. Denote by $\mathcal{Z}_T(\ve)$  the set of the $z$-components of  solutions of the system
(\ref{e:fast})-(\ref{e:slow}) considered on the interval $[0,T]$, that is,
$$
\mathcal{Z}_T(\ve):=\{z(\cdot)\,|\, (u(\cdot),y(\cdot),z(\cdot))\hbox{ satisfies }(\ref{e:fast}),(\ref{e:slow})\}.
$$
By $\mathcal{Z}_T$ denote the set of solutions of the differential inclusion (\ref{eq-DI})
considered on this interval. Note that, as can be readily understood,
\begin{equation}\label{eq-INCL}
\mathcal{Z}_T(\ve)\subset \mathcal{Z}_T \ \ \ \ \forall \ \ve >0.
\end{equation}
The main result of the paper is the following theorem.

\begin{theorem}\label{Th-main}
The equality
\begin{equation}\label{eq-EQ}
{\rm cl}(\mathcal{Z}_T(\ve))= \mathcal{Z}_T \ \ \ \ \forall \ \ve >0
\end{equation}
is valid, where ${\rm cl}$ in the expression above stands for the closure in the uniform  convergence metrics.
That is,
the set  of the $z$-components of  solutions of the system
(\ref{e:fast})-(\ref{e:slow}) is dense in the set  of solutions of the differential inclusion (\ref{eq-DI}).
\end{theorem}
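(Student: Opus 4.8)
The inclusion ${\rm cl}(\mathcal{Z}_T(\ve))\subseteq \mathcal{Z}_T$ is immediate from (\ref{eq-INCL}) together with the fact that $\mathcal{Z}_T$ is closed in the uniform metric (the right-hand side $V(z)$ is compact- and convex-valued, bounded by $M_g$, and upper semicontinuous, so the solution set of (\ref{eq-DI}) is a closed, in fact compact, subset of $C([0,T];\R^n)$). Hence the whole content of the theorem is the reverse inclusion $\mathcal{Z}_T\subseteq {\rm cl}(\mathcal{Z}_T(\ve))$: every solution of the differential inclusion can be approximated uniformly on $[0,T]$ by $z$-components of genuine solutions of (\ref{e:fast})-(\ref{e:slow}).

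The plan is to fix $z(\cdot)\in\mathcal{Z}_T$ and an error tolerance $\mu>0$, and to build, by a standard relaxation/chattering construction, an admissible control $u(\cdot)$ for which the resulting $z$-component stays within $\mu$ of $z(\cdot)$. First, partition $[0,T]$ into $N$ subintervals of length $h=T/N$ with $N$ large. On each subinterval $[t_i,t_{i+1}]$ the value $\dot z(t)$ lies (for a.e.\ $t$) in $V(z(t))\approx V(z(t_i))=\bar{\rm co}\,g(\R^k,\R^m,z(t_i))$; by Carath\'eodory's theorem each relevant velocity is a convex combination of at most $n+1$ points of the form $g(u_{i,j},y_{i,j},z(t_i))$, and by further subdividing and averaging one reduces the task on each small interval to realizing, in the $z$-equation, a velocity arbitrarily close to a fixed value $g(\hat u,\hat y,z(t_i))$ over a short sub-subinterval. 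The mechanism that makes this possible with $\ve$ \emph{not} small is Lemma \ref{Lem-control}: the rank controllability condition (\ref{e-rank}) lets us steer $y(\cdot)$ from its current value to any prescribed value $\hat y$ in an arbitrarily short time $\tau$. So the construction on each piece is: spend a negligibly short time steering $y$ to the needed value $\hat y$ (here $z$ barely moves, since $\|\dot z\|\le M_g$), then hold $u$ at the value $\hat u$ for which $g(\hat u,\hat y,z(t_i))$ is the desired velocity; since $y$ can be kept near $\hat y$ (or re-steered as needed) the $z$-equation integrates to approximately the target displacement. Concatenating over the $N$ pieces and using the boundedness and Lipschitz continuity of $g$ together with a Gronwall estimate to control the accumulation of errors, one obtains a control whose $z$-trajectory is uniformly $\mu$-close to $z(\cdot)$.

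More concretely, I would phrase the inductive step as follows. Given that the constructed trajectory agrees with $z(\cdot)$ to within some $\delta_i$ at time $t_i$, choose finitely many control-state pairs and time fractions realizing $\dot z$ on $[t_i,t_{i+1}]$ as a convex combination; on a partition of $[t_i,t_{i+1}]$ into these fractions, alternately (i) steer $y$ to the required $y$-value on a time interval of length $o(h)$ — feasible for every $\ve>0$ by Lemma \ref{Lem-control}, with the cost to $z$ bounded by $M_g\cdot o(h)$ — and (ii) apply the corresponding constant control on the remaining time. The Lipschitz constant $L_g$ of $g$ in $(y,z)$ controls the discrepancy between $g(u,y(t),z(t))$ and $g(u,\hat y,z(t_i))$, giving a per-step error of order $h\,\delta_i + h\,\omega(h) + o(h)$ where $\omega$ is a modulus of continuity; summing and applying discrete Gronwall yields a total error $\le C(\omega(h)+1/N)e^{L_g T}$, which is $<\mu$ for $N$ large. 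Taking $\mu\to 0$ gives $z(\cdot)\in{\rm cl}(\mathcal{Z}_T(\ve))$.

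The main obstacle — and the place where care is genuinely needed rather than routine — is the bookkeeping that keeps the "time stolen" for steering $y$ asymptotically negligible while simultaneously keeping $y$ at (or returning it repeatedly to) the values required by the chattering pattern, all without the $z$-error compounding across the $O(N)$ switches. One must also handle measurability: the selection $t\mapsto \dot z(t)\in V(z(t))$ need not be continuous, so the Carath\'eodory decomposition must be done measurably (or, more simply, one first approximates $z(\cdot)$ by a solution with piecewise-constant derivative — a standard density fact for differential inclusions with compact convex right-hand side — and then only has finitely many target velocities to realize). Handled in that order, the interplay of Lemma \ref{Lem-control} with the uniform bound (\ref{e-g-bound}) and the Lipschitz property does the rest.
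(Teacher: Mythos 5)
Your proposal is correct and follows essentially the same route as the paper: Carath\'eodory decomposition of velocities in $\bar{\rm co}\,(g(\R^k,\R^m,z))$, fast steering of $y$ via Lemma \ref{Lem-control} with the steering time rendered negligible for $z$ by the bound (\ref{e-g-bound}), and a Lipschitz/Gronwall estimate to control error accumulation over the partition. The only organizational difference is that the paper packages the chattering step as a separate statement about the time-average sets $V(S,y,z)$ (showing ${\rm cl}\,(V(S,y,z))=\bar{\rm co}\,(g(\R^k,\R^m,z))$) and sidesteps your measurable-selection concern by projecting the interval average $(\ve S)^{-1}\int_{t_l}^{t_{l+1}}\dot z\,dt$ onto $V(z_\ve(t_l))$, which is a minor streamlining rather than a different argument.
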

\begin{proof}
The proof of the theorem is given in the next section.
\end{proof}

REMARK I. Theorem \ref{Th-main} resembles results establishing that the Hausdorff distance (induced by the uniform  convergence metric) between the set of the slow components of  solutions of a SP control system and the set  of solutions of the differential inclusion, constructed by averaging of the slow subsystem over the controls and the corresponding solutions of the fast one, tends to zero when the   singular perturbations parameter tends to zero (see
 \cite{Dont-Don}, \cite{Gai0},  \cite{Gai1}, \cite{Gra},
  \cite{QW}). As mentioned above,  in contrast to these results, Theorem \ref{Th-main} is not of asymptotic nature. The equality (\ref{eq-EQ}) is valid 
  for any $\ve >0$, including, e.g., $\ve=1$.

 REMARK II. Note that  the statement of Theorem \ref{Th-main} would look similar to that of Filippov-Wazewski theorem (see \cite{aubin1})
  if $y(t)$ in (\ref{e:slow})  was another control (that is, an arbitrary measurable function taking values in $\R^m$) instead of being the solution of (\ref{e:fast}).

Let us discuss an implication of Theorem \ref{Th-main} for optimal control. Consider the optimal control problem
 \begin{equation}\label{e-SP-problem}
 \inf_{u(\cdot)}G(z_{\ve}(T)):= G_{\ve}^*,
 \end{equation}
 where $G(\cdot)$ is a continuous function and $inf$ is taken over the controls $u(\cdot)$ and the corresponding solutions $(y_{\ve}(\cdot),z_{\ve}(\cdot))$ of system (\ref{e:fast})-(\ref{e:slow}).
Consider also the optimal control problem
 \begin{equation}\label{e-averaged-problem}
 \inf_{u(\cdot), y(\cdot)}G(z(T)):= G^*,
 \end{equation}
 where $inf$ is taken over measurable functions $(u(\cdot), y(\cdot))\in \R^k\times \R^m$ and the corresponding solutions $z(\cdot) $ of the system
 \begin{equation}\label{e-system-free}
 \frac{dz(t)}{dt}=g(u(t),y(t),z(t))  ,   \ \ \ \ z(0)=z_0
 \end{equation}
 (both $u(\cdot)$ and $y(\cdot)$ are playing the role of controls in this system).

\begin{corollary}\label{Cor-OV-equality}
The optimal value in (\ref{e-SP-problem}) is equal to the optimal value in (\ref{e-averaged-problem}):
\begin{equation}\label{e-OV-equality}
  G_{\ve}^*= G^* \ \ \ \ \forall \ \ve >0.
 \end{equation}
\end{corollary}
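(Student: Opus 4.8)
The plan is to deduce Corollary \ref{Cor-OV-equality} from Theorem \ref{Th-main} by observing that the problems (\ref{e-SP-problem}) and (\ref{e-averaged-problem}) are minimizations of one and the same continuous functional over two families of trajectories whose closures in the uniform metric coincide. I would work in the space $C_T:=C([0,T];\R^n)$ with the uniform-convergence metric, let $\mathcal{Y}_T$ denote the set of $z$-components of solutions of the free system (\ref{e-system-free}) (the set over which $G^*$ is formed), and set $\Phi(z(\cdot)):=G(z(T))$. Since the evaluation map $z(\cdot)\mapsto z(T)$ is continuous on $C_T$ and $G$ is continuous, $\Phi$ is continuous on $C_T$; and by the very definitions of the sets involved, $G_{\ve}^*=\inf_{z(\cdot)\in\mathcal{Z}_T(\ve)}\Phi(z(\cdot))$ and $G^*=\inf_{z(\cdot)\in\mathcal{Y}_T}\Phi(z(\cdot))$.

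Next I would establish the chain of inclusions
\begin{equation*}
\mathcal{Z}_T(\ve)\ \subset\ \mathcal{Y}_T\ \subset\ \mathcal{Z}_T \qquad\forall\,\ve>0 .
\end{equation*}
The first inclusion is immediate: if $(u(\cdot),y(\cdot),z(\cdot))$ solves (\ref{e:fast})--(\ref{e:slow}), then the pair $(u(\cdot),y(\cdot))$, used as a control in (\ref{e-system-free}), produces exactly $z(\cdot)$. The second holds because any $z(\cdot)\in\mathcal{Y}_T$ satisfies, for a.e.\ $t$, $\dot z(t)=g(u(t),y(t),z(t))\in g(\R^k,\R^m,z(t))\subset V(z(t))$, so $z(\cdot)$ solves (\ref{eq-DI}). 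Taking closures in $C_T$ and using Theorem \ref{Th-main}, i.e.\ ${\rm cl}(\mathcal{Z}_T(\ve))=\mathcal{Z}_T$ (which, being a closure, makes $\mathcal{Z}_T$ closed), gives
\begin{equation*}
\mathcal{Z}_T={\rm cl}(\mathcal{Z}_T(\ve))\ \subset\ {\rm cl}(\mathcal{Y}_T)\ \subset\ {\rm cl}(\mathcal{Z}_T)=\mathcal{Z}_T ,
\end{equation*}
hence ${\rm cl}(\mathcal{Y}_T)={\rm cl}(\mathcal{Z}_T(\ve))=\mathcal{Z}_T$. It then remains to invoke the elementary fact that for a continuous functional $\Phi$ on a metric space and any subset $S$ one has $\inf_S\Phi=\inf_{{\rm cl}(S)}\Phi$: the inequality $\inf_{{\rm cl}(S)}\Phi\le\inf_S\Phi$ is monotonicity of the infimum, and the reverse follows by approximating each point of ${\rm cl}(S)$ by a sequence in $S$ and invoking continuity of $\Phi$. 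Applying this to $S=\mathcal{Z}_T(\ve)$ and to $S=\mathcal{Y}_T$ yields $G_{\ve}^*=\inf_{\mathcal{Z}_T}\Phi=G^*$, which is (\ref{e-OV-equality}).

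I do not expect a genuine obstacle here: granted Theorem \ref{Th-main}, the corollary follows by soft arguments. The two points worth a little care are the continuity of $\Phi$ in the uniform metric — which is exactly why the cost in (\ref{e-SP-problem}) and (\ref{e-averaged-problem}) is taken to be a continuous function of the terminal state — and the observation $\mathcal{Z}_T(\ve)\subset\mathcal{Y}_T$. The latter is what lets one read off ${\rm cl}(\mathcal{Y}_T)=\mathcal{Z}_T$ directly from Theorem \ref{Th-main}, so that no relaxation (Filippov--Wazewski-type) theorem for the free system (\ref{e-system-free}) is needed; such a theorem would be the natural alternative, but it would require continuity of $g$ in $u$, which is not assumed in the paper.
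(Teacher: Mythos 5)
Your proof is correct and follows essentially the same route as the paper: the paper also introduces the set of trajectories of the free system (there called $\mathcal{Z}_T^0$, your $\mathcal{Y}_T$), sandwiches it via $\mathcal{Z}_T(\ve)\subset\mathcal{Z}_T^0\subset\mathcal{Z}_T$, applies Theorem \ref{Th-main} to conclude that all three sets have the same closure, and then uses continuity of the terminal cost to equate the infima. You merely spell out the last step (that the infimum of a continuous functional over a set equals its infimum over the closure) which the paper leaves implicit.
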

\begin{proof}
Denote by $\mathcal{Z}_T^0$ the set of solutions of system (\ref{e-system-free}) considered on the interval $[0,T]$. As can be readily seen, the following inclusions are valid:
$$
\mathcal{Z}_T(\ve)\subset \mathcal{Z}_T^0\subset \mathcal{Z}_T.
$$
Therefore, by (\ref{eq-EQ}),
$$
{\rm cl}(\mathcal{Z}_T(\ve))= {\rm cl}( \mathcal{Z}_T^0)= \mathcal{Z}_T.
$$
The latter implies (\ref{e-OV-equality}).
\end{proof}

\section{Proof of the Main Result}

Consider the
system
 \begin{equation}\label{e:associate-sys}
\frac{dy(\tau)}{d\tau} = Ay(\tau)+Bu(\tau).
\end{equation}
Note that this system  looks similar to the fast subsystem (\ref{e:fast}) but, in contrast
to the latter, it evolves in the time
scale $\tau=\frac{t}{\ve}$ ((\ref{e:associate-sys}) will be referred to as the  {\em  associated system}). The following lemma is the key element of the subsequent analysis.

\begin{lemma}\label{Lem-control}
For any $y', y''\in \R^m$ there exists a control $u(\cdot)$ that steers the associated system (\ref{e:associate-sys}) from $y'$  to $y''$ in arbitrarily short period of time.
\end{lemma}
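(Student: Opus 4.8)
The plan is to prove Lemma \ref{Lem-control} using the classical fact that the rank controllability condition (\ref{e-rank}) for the linear pair $(A,B)$ is equivalent to complete controllability, together with the observation that linear controllable systems have no obstruction to reaching a target in arbitrarily small time. First I would recall the controllability Gramian $W(\tau) := \int_0^\tau e^{-sA} B B^{\mathsf T} e^{-s A^{\mathsf T}}\, ds$ and note that condition (\ref{e-rank}) forces $W(\tau)$ to be positive definite for \emph{every} $\tau > 0$, not merely for large $\tau$; this is the crucial point that makes the time horizon irrelevant. Indeed, if $W(\tau) v = 0$ for some $v \neq 0$ and some $\tau>0$, then $v^{\mathsf T} e^{-sA} B = 0$ on $[0,\tau]$, and differentiating repeatedly at $s=0$ gives $v^{\mathsf T} A^j B = 0$ for all $j \ge 0$, contradicting (\ref{e-rank}).

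Next, given $y', y'' \in \R^m$ and an arbitrary $\delta > 0$, I would write down the explicit steering control on $[0,\delta]$ in the $\tau$-scale, namely
\begin{equation}\label{e-steer}
u(\tau) := B^{\mathsf T} e^{-\tau A^{\mathsf T}} W(\delta)^{-1}\brou{ e^{-\delta A} y'' - y' },
\end{equation}
and verify by the variation-of-constants formula that the solution of (\ref{e:associate-sys}) with $y(0) = y'$ satisfies $y(\delta) = y''$. This is a routine substitution: $y(\delta) = e^{\delta A} y' + e^{\delta A}\int_0^\delta e^{-\tau A} B u(\tau)\, d\tau$, and plugging in (\ref{e-steer}) collapses the integral to $e^{\delta A} W(\delta) W(\delta)^{-1}(e^{-\delta A} y'' - y') = y'' - e^{\delta A} y'$. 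Since $u(\cdot)$ is continuous (hence measurable) and $\R^k$-valued, and $\delta > 0$ was arbitrary, this establishes that the associated system can be steered from $y'$ to $y''$ in time as short as desired.

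There is essentially no serious obstacle here — the lemma is a standard consequence of linear control theory — so the "hard part" is really just bookkeeping: being careful that the Gramian argument uses an \emph{arbitrary} positive time rather than appealing to a large-time controllability statement, and making sure the time-scale distinction between $t$ and $\tau$ is handled correctly so that the conclusion is stated for the associated system (\ref{e:associate-sys}) as written. I would close by remarking that this is precisely the property that will later let us build fast $y$-excursions inside a window of $t$-length $O(\ve \delta)$ in the proof of Theorem \ref{Th-main}, while the $z$-dynamics, being governed by the bounded right-hand side $g$, barely move over such a window — but that remark belongs to the proof of the theorem rather than to the lemma itself.
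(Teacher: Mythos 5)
Your proposal is correct and follows essentially the same route as the paper: the same Gramian $\int_0^\tau e^{-sA}BB^{\mathsf T}e^{-sA^{\mathsf T}}\,ds$, the same explicit control of the form $B^{\mathsf T}e^{-sA^{\mathsf T}}\xi$, and the same contradiction argument (differentiating $v^{\mathsf T}e^{-sA}B\equiv 0$ at $s=0$) to get positive definiteness from the rank condition (\ref{e-rank}) for every $\tau>0$. The only difference is cosmetic ordering — you establish invertibility of the Gramian before writing the control, the paper does it after — so there is nothing further to add.
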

\begin{proof}
The proof of the lemma follows a standard argument, and we give it at the end of this section.
\end{proof}

Let $y(\tau, u(\cdot),y)$ stand for the solution of the associated system (\ref{e:associate-sys}) obtained with a control $u(\tau)$ and the initial condition $y(0)=y$. For a fixed $z$, denote by $V(S,y,z)$ the set of the time averages:
\begin{equation} \label{e:V(S)}
V(S,y ,z) :=
\bigcup_{u(\cdot)}  {\frac{1}{S}
\int_0^S g\brou{u(\tau),y(\tau, u(\cdot),y),z}d\tau},\quad z={\rm const},
\end{equation}
where the union is taken over all controls (that is, over all measurable functions $u(\cdot)$ taking values in $\R^k$). The set of the time averages similar
to (\ref{e:V(S)}) was introduced/used in \cite{Dont-Don}, \cite{Gai0},  \cite{Gai1}, \cite{Gra}, \cite{QW},  where it was shown that, as $S$ tends to infinity, it converges in the  Hausdorff metrics to a convex and compact set (provided that the associated system satisfies certain controllability or stability conditions). It was also shown
 that it is this set that defines the right-hand-side of the differential  inclusion, the solutions of which approximate the dynamics of the slow components of the SP control system.

The lemma below establishes that in the case under consideration the set ${\rm cl}(V(S,y_0 ,z))$ (the closure of the set $V(S,y_0 ,z) $) is equal to a convex and compact subset of $\R^m$ for any $S>0$, and it also provides an explicit representation for this set.

\begin{lemma}\label{Th-1}
For any $\ z\in \R^n$, any  $y  \in \R^m$, and any $S>0$, the following equality is valid:
\begin{equation} \label{e:Z4}
{\rm cl}\,(V(S,y ,z))=\bar{\rm co}\, (g(\R^k,\R^m,z)).
\end{equation}
\end{lemma}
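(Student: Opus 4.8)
The plan is to prove the two inclusions in \eqref{e:Z4} separately. For the easy direction, ${\rm cl}\,(V(S,y,z))\subseteq\bar{\rm co}\,(g(\R^k,\R^m,z))$, I would note that for any control $u(\cdot)$ and any $\tau\in[0,S]$ the integrand $g(u(\tau),y(\tau,u(\cdot),y),z)$ lies in the set $g(\R^k,\R^m,z)$; hence its time average over $[0,S]$ lies in the closed convex hull of that set (an average of points in a set always lies in its closed convex hull, e.g. by a standard mean-value/separation argument). Since $\bar{\rm co}\,(g(\R^k,\R^m,z))$ is closed, taking closures preserves the inclusion. This is purely routine.

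The substantive direction is $\bar{\rm co}\,(g(\R^k,\R^m,z))\subseteq{\rm cl}\,(V(S,y,z))$. Fix $S>0$ and a target point $v\in\bar{\rm co}\,(g(\R^k,\R^m,z))$. By Carath\'eodory's theorem and a density/closure argument it suffices to approximate points of the form $v=\sum_{i=1}^{N}\lambda_i\,g(u_i,y_i,z)$ with $\lambda_i\ge 0$, $\sum_i\lambda_i=1$, $u_i\in\R^k$, $y_i\in\R^m$. The idea is a ``chattering'' construction: partition $[0,S]$ into consecutive subintervals $I_1,\dots,I_N$ with $|I_i|=\lambda_i S$; on the bulk of $I_i$ hold the state near $y_i$ (with control value $u_i$) so the integrand is close to $g(u_i,y_i,z)$, and use only a short transition sub-interval at the start of each $I_i$ to steer the associated system from its current $y$-value to $y_i$. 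Lemma \ref{Lem-control} is exactly what makes this possible: the associated system \eqref{e:associate-sys} can be driven between any two points in $\R^m$ in arbitrarily short time, so the transition intervals can be taken of total length $<\delta$ for any prescribed $\delta>0$. On those transition intervals the integrand is still bounded by $M_g$ from \eqref{e-g-bound}, so their contribution to the time average is $O(M_g\delta/S)$, which is negligible. Therefore the resulting time average is within $O(M_g\delta/S)$ of $\sum_i\lambda_i g(u_i,y_i,z)=v$, and letting $\delta\to 0$ shows $v\in{\rm cl}\,(V(S,y,z))$.

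Two technical points need care. First, on the ``hold'' portion of $I_i$ one must keep $y(\tau)$ genuinely close to $y_i$, not merely pass through it; since a constant control $u\equiv u_i$ need not fix $y_i$ as an equilibrium of \eqref{e:associate-sys}, I would instead repeatedly re-steer: subdivide the hold-portion into many tiny cells, and on each cell first apply Lemma \ref{Lem-control} to return quickly to $y_i$, then drift for a short time — keeping $y(\tau)$ within a small ball of $y_i$ throughout, so that by the Lipschitz continuity of $g$ in $(y,z)$ (here only $y$ varies, $z$ is constant) the integrand stays within $\eta$ of $g(u_i,y_i,z)$. The cumulative length of all the re-steering micro-intervals is again controlled by choosing the steering times small. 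Second, one must make sure the initial condition $y(0)=y$ is honored; this is automatic because Lemma \ref{Lem-control} lets us steer from the given $y$ to $y_1$ at the very start.

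The main obstacle I anticipate is precisely this ``state-holding'' issue: Lemma \ref{Lem-control} only asserts point-to-point controllability in short time, not that one can park the state at a prescribed point, so the approximation of the integrand by the finitely many values $g(u_i,y_i,z)$ requires the nested chattering/re-steering scheme described above together with a careful bookkeeping of interval lengths (transition lengths $\ll$ cell lengths $\ll$ block lengths $\lambda_i S$) to simultaneously force both the state-deviation error and the measure of the ``bad'' time-set to zero. Once that bookkeeping is set up, the estimate on $\big\|\tfrac1S\int_0^S g\,d\tau-v\big\|$ is a routine splitting of the integral over good and bad sub-intervals using \eqref{e-g-bound} and the Lipschitz bound. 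Combining both inclusions yields \eqref{e:Z4}; note that the right-hand side is independent of $S$ and $y$, and is convex and compact because $g(\R^k,\R^m,z)$ is bounded by \eqref{e-g-bound}.
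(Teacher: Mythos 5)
Your proposal is correct and follows essentially the same route as the paper's proof: the easy inclusion via the integral-average argument, and the reverse inclusion via Carath\'eodory, a partition of $[0,S]$ into blocks of length $\lambda_i S$, and a nested chattering scheme that repeatedly re-steers the associated system back to $y_i$ (using Lemma \ref{Lem-control}) with transition intervals of controlled total measure, then estimates the average by splitting into good and bad time sets using \eqref{e-g-bound} and the Lipschitz constant. You correctly identified the state-holding difficulty as the crux, and your fix is exactly the one the paper implements.
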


\begin{proof}
Since $z$ is a constant parameter, we will suppress it in the notation, that is, we will write (\ref{e:V(S)}) in the form
\begin{equation} \label{e:V(S)-no-z}
V(S,y) :=
\bigcup_{u(\cdot)}  {\frac{1}{S}
\int_0^S g\brou{u(\tau),y(\tau , u(\cdot), y)}d\tau}.
\end{equation}
The inclusion
\begin{equation} \label{Z40}
V(S,y)\subset \bar{\rm co}\,(g(\R^k,\R^m)),
\end{equation}
follows from the definition of the integral. Indeed, take $v\in V(S,y_0)$, then there exists $u(\cdot) $ such that $v=\frac{1}{S}
\int_0^S g\brou{u(\tau),y(\tau, u(\cdot), y)}d\tau$. If the integrand was a simple function, that is, it was a finite sum of indicator functions of Borel measurable sets, then it would be clear that
$$
\frac{1}{S}\int_0^S g\brou{u(\tau),y(\tau , u(\cdot), y)}d\tau\in {\rm co}\,(g(\R^k,\R^m)),
$$
that is, $v\in {\rm co}\,g(\R^r,\R^m)$.
For an arbitrary Borel $g$, the inclusion $v\in \bar{\rm co}\,g(\R^r,\R^m)$
follows from the definition of Lebesgue integral as the limit of integrals of simple functions.

Inclusion (\ref{Z40}) implies that
$$
{\rm cl}\,(V(S,y))\subset \bar{\rm co}\,(g(\R^k,\R^m)).
$$
Let us show that
\begin{equation}\label{Z4}
{\rm co}\,(g(\R^k,\R^m))\subset {\rm cl}\,(V(S,y)).
\end{equation}
(This will, obviously, imply that $\bar{\rm co}\,(g(\R^k,\R^m))\subset {\rm cl}\,(V(S,y)) $.)
Take $v\in {\rm co}\,( g(\R^k,\R^m))$. Then, due to the Caratheodori Theorem,
$$
v=\sum_{i=1}^{l} \lambda_i g(u_i,y_i), \ \ \ \ l\leq k+m+1
$$
for some $(u_i,y_i) \in \R^k \times \R^m$ and some $\lambda_i>0$ with $\ \sum_{i=1}^{l} \lambda_i=1$. Let
$$
S_0=0,\; \ \ \ S_j:=S\sum_{i=1}^{j} \lambda_i,\ \  j=1,\dots, l,\;
$$
which implies that
\begin{equation} \label{Z2}
\lambda_j=\frac{S_{j}-S_{j-1}}{S}.
\end{equation}
Let $\delta > 0$ be arbitrary small
and let $\hat\tau > 0$ be defined by the equation
\begin{equation}\label{e-tau}
\hat\tau := \frac{\delta}{M_g},
\end{equation}
where $M_g$ is defined in (\ref{e-g-bound}). Note that from (\ref{e-tau}) it follows that the solution $y(\tau, u,y)$ of the associated system (\ref{e:associate-sys}) obtained with the  constant valued control $u(\tau)\equiv u$ and with the initial condition $y(0)=y$ satisfies the inequality
\begin{equation}\label{e-tau-1}
\max_{\tau\in [0,\hat\tau]}\|y(\tau, u, y)-y\|\leq \delta\ \ \ \ \forall \ (u,y)\in \R^k\times\R^m.
\end{equation}
Let us construct a control-state process  $(u(\cdot),y(\cdot))$ of the  system   (\ref{e:associate-sys}) on the interval $[0,S]$ such that, for all $j=1,\dots, l$, the following inequalities hold true (for brevity, the solution of (\ref{e:associate-sys}) is denoted below as $y(\tau)$ instead of $y(\tau, u(\cdot),y)$):
$$
\left|\frac{1}{S_{j}-S_{j-1}}\int_{S_{j-1}}^{S_{j}} g(u(\tau),y(\tau))\,d\tau-g(u_j, y_j)\right|\le K\delta
$$
for some constant $K$ (recall that $S_0:= 0$ and $S_l=S$).

Consider the interval $[S_0,S_1]$. Assuming (without loss of generality) that $\delta <S_1 $, define the control $u(\cdot)$
on the interval $[0,\delta]\subset [0,S_1]$ in such a way that $y(\delta)=y_1$. (This is possible due to Lemma \ref{Lem-control}.) Set $\tau_{11}:=\delta $ and $\tau_{12}:=\delta +\hat \tau .$ Extend the definition of the control $u(\cdot)$ by taking it to be equal to $u_1$ on the interval   $(\tau_{11}, \tau_{12}] $ if
$\tau_{12}< S_1 $. In case $\tau_{12}\geq S_1 $, take $u(\cdot)$ to be equal to $u_1$  on the interval $ (\tau_{11}, S_1] $. The control $u(\cdot)$ will be, thus, defined on $[S_0,S_1$]. Note that, by (\ref{e-tau-1}),
$$
\max_{\tau\in [\tau_{11}, \tau_{12}]}||y(\tau)-y_1||\leq \delta \ \ {\rm if} \ \ \tau_{12}< S_1
$$
and
$$
\max_{\tau\in [\tau_{11}, S_1]}||y(\tau)-y_1||\leq \delta \ \ {\rm if} \ \ \tau_{12}\geq S_1 .
$$

If $\tau_{12}< S_1 $, set  $\tau_{13}:= \min \{\tau_{12}   +\frac{\delta}{2},\tau_{12}   + \frac{S_1-\tau_{12}}{2}\} $ and $\tau_{14}:= \tau_{13} +\hat\tau $. Extend the definition of the control $u(\cdot)$ to the interval $(\tau_{12},\tau_{13}]$
in such a way that the corresponding solution  $y(\tau)$
of the system (\ref{e:associate-sys}) satisfies  the equation $y(\tau_{13})=y_1$. (Again, this is possible due to Lemma \ref{Lem-control}.)
Also, extend the definition of the control $u(\cdot)$ further by taking it to be equal to $u_1$ on the interval $(\tau_{13}, \tau_{14}]$ if
$\tau_{14}< S_1 $. In case $\tau_{14}\geq S_1 $, take $u(\cdot)$ to be equal to $u_1$  on the interval $ (\tau_{13}, S_1] $. The control $u(\cdot)$ will be, thus, defined on $[S_0,S_1$]. By (\ref{e-tau-1}),
$$
\max_{\tau\in [\tau_{11}, \tau_{12}]\cup [\tau_{13}, \tau_{14}]}||y(\tau)-y_1||\leq \delta \ \ {\rm if} \ \ \tau_{14}< S_1
$$
and
$$
\max_{\tau\in [\tau_{11}, \tau_{12}]\cup [\tau_{13}, S_1]}||y(\tau)-y_1||\leq \delta \ \ {\rm if} \ \ \tau_{14}\geq S_1 .
$$

In the general case (for an arbitrary small $\delta$), one can proceed in a similar way to:

(i) Define the sequence of moments of time $\tau_{11}, \tau_{12}, \cdots , \tau_{1\bar s}, \tau_{1\bar s+1}$, where $\bar s$ is odd,
$\tau_{\bar s+1} + \bar\tau \geq S_1$, and, for any odd $s$ such that $ 3\leq s < \bar s$,
\begin{equation} \label{e:Z1-0}
\tau_{1 s}:= \min \{\tau_{1s-1}   +\frac{\delta}{2^{s-2}},\tau_{1s-1}   + \frac{S_1-\tau_{1s-1}}{2}\}, \ \ \ \ \tau_{1 s+1}:= \tau_{1 s} +\hat\tau < S_1;
\end{equation}

and

(ii) Construct the control $u(\cdot)$ that along with the corresponding solution $y(\cdot)$ of (\ref{e:associate-sys}) satisfy the relations
\begin{equation} \label{e:Z1}
u(\tau)\equiv u_1, \; ||y(\tau)-y_1||\le \delta \; \forall \tau\in  D:= (\tau_{11},\tau_{12}] \cup(\tau_{13},\tau_{14}] \cup \ldots \cup (\tau_{1\bar s-2},\tau_{1\bar s-1}]\cup (\tau_{1\bar s}, S_1],
\end{equation}
with $y(\tau_{1,s})=y_1$ for all  $s=1,3,\ldots, \bar s$.

By construction (see (\ref{e:Z1-0}) and (\ref{e:Z1})), the Lebesgue measure of the set $[0,S_1]\setminus D$ (which we denote as $meas([0,S_1]\setminus D)$ satisfies the inequality
$$
meas([0,S_1]\setminus D)= \tau_{11} + (\tau_{13}-\tau_{12})+\cdots + (\tau_{1\bar s}-\tau_{1\bar s-1})\leq  \delta + \frac{\delta}{2}+\cdots + \frac{\delta}{2^{\bar s-2}}
\leq 2\delta.
$$
Therefore,
$$
\left|\frac{1}{S_1}\int_{[0,S_1]\setminus D} (g(u(\tau),y(\tau))-g(u_1,y_1))\,d\tau\right|\leq \frac{2M_g}{S_1} meas ([0,S_1]\setminus D)\leq \frac{4M_g}{S_1}\delta ,
$$
where $M_g$ is as in (\ref{e-g-bound}).
Also, due to (\ref{e:Z1}),
$$
\left|\frac{1}{S_1}\int_{D} \big(g(u(\tau),y(\tau))\, -g(u_1,y_1)\big)d\tau\right|=\left|\frac{1}{S_1}\int_{D} g(u_1,y(\tau))\,d\tau-g(u_1,y_1)\right|\leq L\delta,
$$
where $L$ is a Lipschitz constant of $g$. Consequently,
\begin{equation*}
\begin{aligned}
&\left|\frac{1}{S_1}\int_0^{S_1} g(u(\tau),y(\tau))\,d\tau-g(u_1,y_1)\right|=
\left|\frac{1}{S_1}\int_0^{S_1} (g(u(\tau),y(\tau))-g(u_1,y_1))\,d\tau\right|\le\\
&\left|\frac{1}{S_1}\int_{[0,S_1]\setminus D} (g(u(\tau),y(\tau))-g(u_1,y_1))\,d\tau\right|+
\left|\frac{1}{S_1}\int_{D} (g(u(\tau),y(\tau))-g(u_1,y_1))\,d\tau\right|\le \\
&\left(\frac{4M_g}{ S_1}+L\right)\delta.
\end{aligned}
\end{equation*}
Continuing similarly on each subinterval $[S_{j-1},S_j]$, we construct the control-state  process $(u(\cdot),y(\cdot))$ on $[0,S]$ such that for all $j=1,\dots,l$
\begin{equation}\label{Z3}
\left|\frac{1}{S_{j}-S_{j-1}}\int_{S_{j-1}}^{S_{j}} g(u(\tau),y(\tau))\,
d\tau-g(u_j,y_j)\right|\le \left(\frac{4M_g}{S_{j}-S_{j-1}}+L\right)\delta.
\end{equation}
For this process, taking into account (\ref{Z2}) and (\ref{Z3}), we have
\begin{equation*}
\begin{aligned}
&\left|\frac{1}{S}\int_{0}^{S} g(u(\tau),y(\tau))\,d\tau-v\right|\\
=&\left|\sum_{i=1}^{l}\frac{S_{i}-S_{i-1}}{S}\frac{1}{S_{i}-S_{i-1}} \int_{S_{i-1}}^{S_{i}} g(u(\tau),y(\tau))\,d\tau-
\sum_{i=1}^{l}\lambda_ig(u_i,y_i)\right|\\
=&\sum_{i=1}^{l}\lambda_i\left|\frac{1}{S_{i}-S_{i-1}}\int_{S_{i-1}}^{S_{i}}g(u(\tau),y(\tau))\,d\tau-g(u_i,y_i)\right|\\
\le &\,\delta\sum_{i=1}^{l}\lambda_i\left(\frac{4M_g}{S_{i}-S_{i-1}}+L\right)
\le\delta\left(\frac{4M_g}{\min_{i}\{S_{i}-S_{i-1}\}}+L\right).
\end{aligned}
\end{equation*}
Since $\delta$ is arbitrarily small, this implies that $v\in {\rm cl}\,V(S,y)$, which implies (\ref{Z4}). The lemma is proved.
\end{proof}

\begin{lemma}\label{Prop-Lipschitz-elementary}
The multivalued function $\ V(z)$ defined in accordance with (\ref{eq-RHS}) is Lipschitz continuous. That is,
\begin{equation}\label{e:V-Lipschitz-continuity}
d_{H}(V(z'),V(z'')) \leq L||z'-z''||  \ \ \ \  \forall z', z'',
\end{equation}
where $d_{H}$ stands for the Hausdorff distance between sets and $L$ is the Lipschitz constant of $g(u,y,z)$ in $z$.
\end{lemma}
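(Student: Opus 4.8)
The plan is to prove the Hausdorff Lipschitz continuity of $V(z) = \bar{\rm co}\,(g(\R^k,\R^m,z))$ directly from the pointwise Lipschitz continuity of $g(u,y,\cdot)$, using two standard facts: that taking closed convex hulls does not increase Hausdorff distance, and that the set-valued map $(u,y)\mapsto g(u,y,z)$ being $L$-Lipschitz in $z$ uniformly in $(u,y)$ transfers to the union $g(\R^k,\R^m,z)$.

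First I would establish the estimate $d_H(g(\R^k,\R^m,z'),\,g(\R^k,\R^m,z'')) \le L\|z'-z''\|$. To see the one-sided bound, pick any $v' \in g(\R^k,\R^m,z')$, so $v' = g(u,y,z')$ for some $(u,y)$; then $v'' := g(u,y,z'')$ lies in $g(\R^k,\R^m,z'')$ and $\|v'-v''\| \le L\|z'-z''\|$ by the Lipschitz assumption on $g$. Hence every point of the first set is within $L\|z'-z''\|$ of the second set, and by symmetry the Hausdorff distance between the two sets is at most $L\|z'-z''\|$.

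Next I would invoke the elementary fact that for any bounded sets $P,Q \subset \R^n$ one has $d_H(\bar{\rm co}\,P,\,\bar{\rm co}\,Q) \le d_H(P,Q)$. (This follows because if $P$ is contained in the $r$-neighborhood of $Q$, then $\bar{\rm co}\,P$ is contained in the $r$-neighborhood of $\bar{\rm co}\,Q$, since the $r$-neighborhood of a convex set is convex and closed, and it contains $Q$ hence its closed convex hull; applying this in both directions gives the claim.) Note that $g(\R^k,\R^m,z)$ is bounded by $M_g$ uniformly in $z$ by (\ref{e-g-bound}), so these operations are legitimate. Combining this with the previous step yields
\begin{equation*}
d_H(V(z'),V(z'')) = d_H\big(\bar{\rm co}\,(g(\R^k,\R^m,z')),\,\bar{\rm co}\,(g(\R^k,\R^m,z''))\big) \le d_H\big(g(\R^k,\R^m,z'),\,g(\R^k,\R^m,z'')\big) \le L\|z'-z''\|,
\end{equation*}
which is exactly (\ref{e:V-Lipschitz-continuity}).

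I do not anticipate a genuine obstacle here; the result is a routine consequence of the definitions. The only point requiring a modicum of care is the passage from the Lipschitz estimate on the raw image sets to the estimate on their closed convex hulls — one must make sure the neighborhood-of-a-convex-set argument is invoked correctly (in particular that the closed $r$-neighborhood of a convex set is convex) rather than appealing to convexity of the Hausdorff distance in some vaguer sense. Everything else is immediate from the uniform Lipschitz and boundedness hypotheses on $g$.
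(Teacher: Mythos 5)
Your proof is correct, but it takes a genuinely different route from the paper's. The paper does not work with the definition $V(z)=\bar{\rm co}\,(g(\R^k,\R^m,z))$ directly; instead it proves the Lipschitz estimate for the finite-horizon average sets $V(S,y_0,z)$ — by running the \emph{same} control $u(\cdot)$ (hence the same trajectory $y(\cdot)$ of the associated system) for both parameters $z'$ and $z''$ and comparing the resulting integrals termwise — and then transfers the estimate to $V(z)$ via the identity ${\rm cl}\,V(S,y_0,z)=V(z)$ established in Lemma~\ref{Th-1}. Your argument bypasses Lemma~\ref{Th-1} entirely: you first show $d_H\bigl(g(\R^k,\R^m,z'),\,g(\R^k,\R^m,z'')\bigr)\le L\|z'-z''\|$ pointwise from the uniform Lipschitz hypothesis on $g$, and then invoke the non-expansiveness of the closed-convex-hull operation under the Hausdorff distance, which you justify correctly (the closed $r$-neighborhood $\bar{\rm co}\,Q+r\bar B^n$ is closed and convex, contains the other image set, hence contains its closed convex hull; boundedness by $M_g$ keeps everything finite). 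This makes your proof more elementary and self-contained, and it would remain valid even if the representation of ${\rm cl}\,V(S,y,z)$ were unavailable; the paper's version, on the other hand, reuses machinery already in place and in passing records the same Lipschitz bound for the sets $V(S,y_0,z)$ themselves, uniformly in $S$. Both arguments establish (\ref{e:V-Lipschitz-continuity}) in full.
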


\begin{proof} Let $z', z''$ be arbitrary elements of $\R^n$.
Take some  $S>0$ and choose an arbitrary element $\ v' $ from $\ V(z',S,y_0)$. From the  definition of $\ V(z',S,y_0)\ $  it follows that there exists a control $\ u(\cdot) $ such that
$\
v' = \frac{1}{S}\int_0^S g(z',u(\tau), y(\tau))d\tau.
$
Define $\ v'' $ by
$$
v'':= \frac{1}{S}\int_0^S g(z'',u(\tau), y(\tau))d\tau \in V(z'',S,y_0).
$$
We have
$$
||v'-v''||\leq \frac{1}{S}\int_0^S||g(z',u(\tau), y(\tau))-g(z'',u(\tau), y(\tau))||d\tau
$$
$$
\leq L||z'-z''|| \ \ \ \ \Rightarrow \ \ \ \ \ d(v', V(z'')) \leq L||z'-z''||,
$$
where $d(v,V)$ stands for the distance from a vector $v$ to a set $V$ ($d(v,V):= \inf_{w\in V}||v-w|| $).
Since $\ v'$ is an arbitrary element of $\ V(z',S,y_0)$, it implies that
$$\ \sup_{v\in V(z',S,y_0)}d(v, V(z'',S,y_0))\leq L||z'-z''||.$$
Since $V(z)={\rm cl}\,V(S,y_0,z)$, we conclude that
$$
\sup_{v\in V(z')}d(v, V(z''))\leq L||z'-z''||.
$$
Similarly, it is established that
$ \ $$\ \sup_{v\in V(z'')}d(v, V(z'))\leq L||z'-z''||.$
\qquad\end{proof}

\medskip

{\em  Proof of Theorem \ref{Th-main}}.
Due to (\ref{eq-INCL}), to prove the theorem it is sufficient to show that
$$
\mathcal{Z}_T \subset {\rm cl}(\mathcal{Z}_T(\ve))  \ \ \ \ \forall \ \ve >0.
$$
This, in turn, will be shown, if we establish that,
for any $\delta>0$,
corresponding to any solution $z(t)$ of the differential inclusion  (\ref{eq-DI}), there exists  a control $u_{\ve}(t)$, which, being used in the system (\ref{e:fast})-(\ref{e:slow}), generates the solution $(y_{\ve}(t), z_{\ve}(t))$ that satisfies the inequality
\begin{equation}\label{e:Th2.3-3-extra}
\max_{t\in [0,T]}||z_{\ve}(t)- z(t)|| \leq \delta .
\end{equation}
To prove the latter statement, take an arbitrary solution $z(t)$ of (\ref{eq-DI}), choose an arbitrary $\delta > 0$ and construct the control $u_{\ve}(t)$ that insures the validity of (\ref{e:Th2.3-3-extra}).

Take  $S\in (0, \frac{T}{\ve}]$ and partition the interval $\ [0,T]\ $ by the points
\begin{equation}\label{e:partition}
t_l := l \left(\ve S\right) \ , \ \ \ \ l = 0,1,..., N_{\ve S}:= \left\lfloor \frac{T}{\ve S} \right\rfloor ,
\end{equation}
 where  $\lfloor\cdot\rfloor $ is the floor function (that is, for any real $x$, $\lfloor x\rfloor $ is the maximal integer that is less or equal than $x$).

 Let $v_0$ be the projection of the vector
$\
\left(\ve S\right)^{-1}\int_{0}^{t_{1}}\frac{dz(t)}{dt}dt \
$
onto the set $V(z_0)$. That is,
$$
v_0:= {\rm argmin}_{v\in V\left(z_0\right)} \left\{\left\| \left(\ve S\right)^{-1}\int_{0}^{t_{1}}\frac{dz(t)}{dt}dt -v \right\| \right\}.
$$
By Lemma \ref{Th-1}, ${\rm cl}\,V(S,y_0,z_0)=V(z_0)$. Therefore, there exists a control
$u_0(\tau)$   such that, being used in the associated system  (\ref{e:associate-sys}) on the interval $\ [0, \frac{t_{1}}{\ve}], \ $
ensures that
$$
\left\|v_0-\frac{1}{S}\int_{0}^{\frac{t_{1}}{\ve}}
g(u_0(\tau), y_0(\tau), z_0) d\tau\right\|\le \ve S,
$$
where $y_0(\tau):= y(\tau , u_0(\cdot),y_0)  $.

Define the control $u_{\ve}(t) $  on the interval $[0,t_1)$ by the equation
$$
 u_{\ve}(t):= u_0\left(\frac{t}{\ve}\right)\ \ \forall \ t\in [0,t_1)
$$
and show how this definition can be extended to the interval $[0,T] $.

Assume  that the control $u_{\ve}(t) $ has been defined on the interval $[0,t_l)$ and denote by $(y_{\ve}(t), z_{\ve}(t))$   the corresponding solution of  the system (\ref{e:fast})-(\ref{e:slow}) on this interval. Extend the definition of $u_{\ve}(t) $ to the interval
$[0,t_{l+1}) $ by following the steps:

{  (a)} Define  $v_l\ $ as the projection of the vector
$\
\left(\ve S\right)^{-1}\int_{t_l}^{t_{l+1}}\frac{dz(t)}{dt}dt\
$
onto the set $V(z_{\ve}(t_l))$. That is,
\begin{equation}\label{e-proj-1}
v_l:= {\rm argmin}_{v\in V(z_{\ve}(t_l))} \left\{\left\| \left(\ve S\right)^{-1}\int_{t_l}^{t_{l+1}}\frac{dz(t)}{dt}dt -v \right\| \right\}.
\end{equation}

{  (b)} Define a control
$u_l(\tau)$  such that, being used in the associated system  (\ref{e:associate-sys}) on the interval $\ [\frac{t_l}{\ve}, \frac{t_{l+1}}{\ve}], \ $
ensures that
\begin{equation}\label{e:proof-10-1-extra}
\left\|v_l-\frac{1}{S}\int_{\frac{t_l}{\ve}}^{\frac{t_{l+1}}{\ve}}
g(u_l(\tau),  y_l(\tau) , z_{\ve}(t_l)) d\tau\right\|\le \ve S ,
\end{equation}
where $ y_l(\tau) $ is the solution of the associated system (\ref{e:associate-sys})   obtained with the   control $u_l(\tau)$ and with the initial condition $y_l(\frac{t_l}{\ve})= y_{\ve}(t_l) $.   The existence of such control follows the fact that, by Lemma \ref{Th-1},  ${\rm cl}\,V(S,y_{\ve}(t_l),z_{\ve}(t_l)) = V(z_{\ve}(t_l)) $.

{  (c)} Define the control $u_{\ve}(t)$ on the interval $[t_l, t_{l+1})$ as equal to $u_l(\frac{t}{\ve})$. That is,
\begin{equation}\label{e:z-aaproximating control}
\ u_{\ve}(t):= u_l\left(\frac{t}{\ve}\right)\ \ \forall \ t\in [t_l, t_{l+1}).
\end{equation}

Proceeding in a similar way, one can extend the  definition of the control
$u_{\ve}(t)$  to the interval $[0,t_{N_{\ve S}})$. On the interval $[t_{N_{\ve S}}, T] $, the control  $u_{\ve}(t)$ can be taken to be equal to an arbitrary element $u$. Denote by $(y_{\ve}(t), z_{\ve}(t))$   the corresponding solution of the system (\ref{e:fast})-(\ref{e:slow}) on the interval $[0,T]$.

Denote
$$
(\bar y_{\ve}(\tau), \bar z_{\ve}(\tau)):= (y_{\ve}(\ve\tau), z_{\ve}(\ve\tau))\ \ \ \ \forall \ \tau\in [0, \frac{T}{\ve}].
$$
Then
$$
(\bar y_{\ve}(\tau_l), \bar z_{\ve}(\tau_l)) = (y_{\ve}(t_l), z_{\ve}(t_l)) \ \ \ \ \forall \ l=0,1,..., N_{\ve},
$$
where $\ \tau_l= \frac{t_l}{\ve}=lS$ (see (\ref{e:partition})), and
\begin{equation}\label{e:z-epsSeps-estimate}
\max_{\tau\in [\tau_l,\tau_{l+1}] }||\bar z_{\ve} (\tau)- \bar z_{\ve} (\tau_l)||
= \max_{t\in [t_l,t_{l+1}] }|| z_{\ve} (t)- z_{\ve} (t_l)||
\leq M_g\ve S ,
\end{equation}
where $M_g$ is defined in (\ref{e-g-bound}).
Also notice that
\begin{equation} \label{e:est-z-2-1}
 \max_{t \in [t_l, t_{l+1}]}||z(t)-z(t_l)|| = \max_{\tau \in [\tau_l, \tau_{l+1}]}||z(\ve\tau)-z(t_l)|| \leq M_g \ve S,
\end{equation}
since
$$
\max\{||v|| \ | \  v\in V(z) ,\
z \in Z \} \leq M_g.
$$

Let us verify that thus defined control ensures the validity of (\ref{e:Th2.3-3-extra}).
Subtracting the equation
$$
z(t_{l+1}) = z(t_l) +
\int_{t_l}^{ t_{l+1}}\frac{dz(t)}{dt}dt
$$
from the equation
$$
z_{\ve}(t_{l+1}) = z_{\ve}(t_l) +
\int_{t_l}^{ t_{l+1}}g(u_{\ve}(t), y_{\ve}(t),z_{\ve}(t))dt,
$$
one obtains, taking into account (\ref{e:z-aaproximating control}), that
\begin{equation}\label{e:proof-2-17-1}
\begin{aligned}
&||z_{\ve}(t_{l+1})- z( t_{l+1})|| \leq
||z_{\ve}(t_{l})- z( t_{l})||\\
&+ \ve \int_{\tau_l}^{\tau_{l+1}}||g(u_l(\tau),\bar y_{\ve}(\tau), \bar z_{\ve}(\tau))
-g(u_l(\tau),\bar y_{\ve}(\tau), \bar z_{\ve}(\tau_l)) || d\tau\\
&+ \ve S \left\|\frac{1}{S}\int_{\tau_l}^{\tau_{l+1}}
g(u_l(\tau),\bar y_{\ve}(\tau), \bar z_{\ve}(\tau_l))d\tau  - v_l \right\|
+ \ve S \left\|v_l - \frac{1}{\ve S}\int_{t_l}^{ t_{l+1}}\frac{dz(t)}{dt}dt\right\|.
\end{aligned}
 \end{equation}
From Lipschitz continuity of $g(u,y,z)$ in $z$ and (\ref{e:z-epsSeps-estimate}) we obtain
\begin{equation}\label{e-estimate-aux-1}
\ve \int_{\tau_l}^{\tau_{l+1}}||g(u_l(\tau),\bar y_{\ve}(\tau), \bar z_{\ve}(\tau))
-g(u_l(\tau),\bar y_{\ve}(\tau), \bar z_{\ve}(\tau_l)) || d\tau \leq \ve S L \left(M_g \ve S\right).
\end{equation}
Due to (\ref{e:V-Lipschitz-continuity}) and (\ref{e:est-z-2-1}),
\begin{equation*}
\begin{aligned}
&\frac{dz(t)}{dt} \in V(z(t))\subset V(z(t_l)) + L||z(t)-z(t_l)||\bar B^n\\
&\subset V(z_{\ve}(t_l)) + L(||z(t)-z(t_l)||+ ||z(t_l)- z_{\ve}(t_l) ||)\bar B^n\\
&\subset V(z_{\ve}(t_l)) + (LM_g \ve S+ L||z(t_l)- z_{\ve}(t_l) ||) \bar B^n \ \ \ \forall \ t \in [t_l,t_{l+1}],
\end{aligned}
 \end{equation*}
where $\bar B^n$ is the closed unit ball in $R^n$. Consequently,
$$
\frac{1}{\ve S}\int_{t_l}^{t_{l+1}}\frac{dz(t)}{dt}dt \in
 V(z_{\ve}(t_l)) + (LM \ve S+ L||z(t_l)- z_{\ve}(t_l) ||)\bar B^n,
$$
and, by (\ref{e-proj-1}),
\begin{equation*}
\begin{aligned}
&\left\| \frac{1}{\ve S}\int_{t_l}^{ t_{l+1}}\frac{dz(t)}{dt}dt - v_l\right\|= d\left(\frac{1}{\ve S}\int_{t_l}^{ t_{l+1}}\frac{dz(t)}{dt}dt, V(z_{\ve}(t_l))\right)\\
&\leq LM_g \ve S+ L||z(t_l)- z_{\ve}(t_l) ||.
\end{aligned}
 \end{equation*}
Using the latter and  (\ref{e-estimate-aux-1}), (\ref{e:proof-10-1-extra}), one can obtain from (\ref{e:proof-2-17-1})
\begin{equation*}
\begin{aligned}
&||z_{\ve}(t_{l+1})- z( t_{l+1})|| \leq
||z_{\ve}(t_{l})- z( t_{l})|| +  \ve S L \left(M_g \ve S \right)+(\ve S)^2\\
&+ LM_g (\ve S)^2 +
L\ve S||z(t_l)- z_{\ve}(t_l) ||
=(1+L\ve S)||z(t_l)- z_{\ve}(t_l) ||+(\ve S)^2(2LM_g+1)\\
&\leq \left(1+\frac{LT}{N_{\ve S}}\right)||z(t_l)- z_{\ve}(t_l) ||+\left(\frac{T}{N_{\ve S}}\right)(\ve S) (2LM_g+1),
\end{aligned}
 \end{equation*}
where the last inequality follows from the definition of $N_{\ve S}$ (see (\ref{e:partition})).
Using an argument similar to that of Gronwall's lemma
(see, e.g., Proposition 5.1 in \cite{Gai1}), one  can now derive that
 \begin{equation}\label{e-estimate-aux-2}
||z_{\ve}(t_{l})- z( t_{l})||\leq L^{-1}e^{LT}(2LM_g+1) \ve S, \ \ \ l=0,1,..., N_{\ve S}.
\end{equation}
Estimate (\ref{e-estimate-aux-2}), along with (\ref{e:z-epsSeps-estimate}) and (\ref{e:est-z-2-1}), imply that
$$
||z_{\ve}(t)- z( t)||\leq L^{-1}e^{LT}(2LM_g+1) \ve S+2M_g\ve S  \ \  \; \forall t\in [0,T].
$$
The right-hand-side in the expression above tends to zero with $S$ tending to zero. Therefore, it can be made less or equal than $\delta$ if $S$ is chosen small enough.
This proves (\ref{e:Th2.3-3-extra}).
\endproof

{\em  Proof of Lemma \ref{Lem-control}.}
By Cauchy formula, the lemma will be proved if we show that, for any $\tau >0$, there exists a control $u(\cdot)$ such that
\begin{equation}\label{e-L1-1}
y''=e^{A\tau}y'+\int_0^{\tau}e^{A(\tau-s)}Bu(s)\,ds\ \ \ \ \Leftrightarrow\ \ \ \ e^{-A\tau}y'' -y' = \int_0^{\tau}e^{-As}Bu(s)\,ds .
\end{equation}
Take  $u(s)=B^Te^{-A^Ts}\xi$ and show that there exists $\xi\in\R^m$ such that these equalities are satisfied.
To this end, substitute $u(s)=B^Te^{-A^Ts}\xi$ into the second equality in (\ref{e-L1-1}) to obtain
$$
e^{-A\tau}y'' -y' = \left(\int_0^{\tau}e^{-As}BB^Te^{-A^Ts}\,ds\right)\xi,
$$
therefore,
$$
\left(\int_0^{\tau}e^{-As}BB^Te^{-A^Ts}\,ds\right)^{-1}\Big(e^{-A\tau}y'' -y'\Big)=\xi ,
$$
where the latter implication is valid if the matrix $\left(\int_0^{\tau}e^{-As}BB^Te^{-A^Ts}\,ds\right)$ is non-singular.

Thus, a sufficient condition for (\ref{e-L1-1}) to be valid is non-singularity of the matrix $\left(\int_0^{\tau}e^{-As}BB^Te^{-A^Ts}\,ds\right)$.
Since this matrix
$\left(\int_0^{\tau}e^{-As}BB^Te^{-A^Ts}\,ds\right)$ is non-negative definite, to show that it is non-singular, one needs to show that it is positive definite.
Assume the contrary, that is, there exists a nonzero vector $v$ and time $\tau$ such that
$$
v^T\left(\int_0^{\tau}e^{-As}BB^Te^{-A^Ts}\,ds\right)v=0.
$$
This is possible only if
$$
v^Te^{-As}BB^Te^{-A^Ts}v\equiv 0\quad \hbox{for all }s\in [0,\tau].
$$
The latter identity implies  that
$$
v^Te^{-As}B\equiv 0\quad \hbox{for all } s\in [0,\tau].
$$
Differentiating this identity with respect to $s$ $j$ times, $j=0,\dots,m-1$ and plugging in $s=0$ we get
$$
v^TB=0,\,v^TAB=0,\,\dots,v^TA^{m-1}B=0,
$$
or, in a matrix form,
$$
v^T[B, AB,\dots,A^{m-1}B]=0.
$$
This implies that the rank of the matrix $[B, AB,\dots,A^{m-1}B]$ is less than $m$, which is a contradiction to our assumption
(\ref{e-rank}).
\endproof

\end{document}